\newtheorem{theorem}{Theorem}[section]
\newtheorem{lemma}{Lemma}[section]
\newtheorem{conjecture}{Conjecture}
\newtheorem{corollary}{Corollary}[section]
\newenvironment{proof}
      {\medskip\noindent{\bf Proof:}\hspace{1mm}}
      {\hfill$\Box$\medskip}
\def\Ddots{\mathinner{\mkern1mu\raise\p@
\vbox{\kern7\p@\hbox{.}}\mkern2mu
\raise4\p@\hbox{.}\mkern2mu\raise7\p@\hbox{.}\mkern1mu}}
\title{\vspace{-0.7cm}An approximate version of Sidorenko's conjecture}
\author{David Conlon\thanks{St John's College, Cambridge CB2 1TP, United
Kingdom.
E-mail: {\tt
D.Conlon@dpmms.cam.ac.uk}. Research supported by a Junior Research Fellowship
at St John's College.} \and Jacob Fox\thanks{Department of
Mathematics, Princeton University, Princeton, NJ 08544. Email: {\tt
jacobfox@math.princeton.edu}. Research supported by an NSF Graduate
Research Fellowship and a Princeton Centennial Fellowship.} \and
Benny Sudakov\thanks{Department of Mathematics,
UCLA,  Los Angeles, CA 90095. Email: {\tt bsudakov@math.ucla.edu}. Research
supported in part by NSF CAREER award DMS-0812005 and by a
USA-Israeli BSF grant.}}
\date{}
\begin{document}
\maketitle

\begin{abstract}
A beautiful conjecture of Erd\H{o}s-Simonovits and Sidorenko states that if $H$
is a bipartite graph, then the random graph with edge
density $p$ has in expectation asymptotically the minimum number of copies of
$H$ over all graphs of the same order and edge density. This conjecture also
has an equivalent analytic form and has connections to a broad range of topics,
such
as matrix theory, Markov chains, graph limits, and quasirandomness.
Here we prove the conjecture if $H$ has a vertex complete to the other part,
and deduce an approximate version of the conjecture
for all $H$. Furthermore, for a large class of bipartite graphs, we prove a
stronger stability result which answers a question of Chung, Graham, and Wilson
on quasirandomness for these graphs.
\end{abstract}

\section{Introduction}

A fundamental problem in extremal graph theory (see \cite{Bo78} and its
references) is to determine or estimate
the minimum number of copies of a graph $H$ which must be contained in another
graph $G$ of a certain order and size. The special case where one wishes to
determine the minimum number of edges in a graph on $N$ vertices which
guarantee a single copy of $H$ has received particular attention. The case
where $H$
is a triangle was solved by Mantel more than a century ago. This was
generalized to cliques by Tur\'an and the Erd\H{o}s-Stone-Simonovits
theorem determines the answer asymptotically if $H$ is not bipartite. For
bipartite
graphs $H$, a classical result of K\H{o}v\'ari, S\'os, and Tur\'an implies that
$O(N^{2-\epsilon_H})$ edges are sufficient for some
$\epsilon_H>0$, but, despite much effort by researchers, the
asymptotics, and even good estimates for the largest possible $\epsilon_H$, are
understood for relatively few bipartite graphs.

The general problem can be naturally stated in terms of subgraph densities. The
{\it edge density} of a graph $G$ with $N$ vertices and $M$ edges is $M/{N
\choose 2}$. More generally, the {\it $H$-density} of a graph $G$ is the
fraction of all one-to-one mappings from the vertices of $H$ to the vertices of
$G$ which map edges of $H$ to edges of $G$. The general extremal problem asks
for the minimum possible $H$-density over all graphs on $N$ vertices with edge
density $p$. For fixed $H$, the asymptotic answer as $N \to \infty$ is a
function of $p$. Determining this function is a classical problem and
notoriously difficult even in the case where $H$ is the complete graph of order
$r$.
Early results in this case were obtained by Erd\H{o}s, Goodman, Lov\'asz,
Simonovits, Bollob\'as, and Fisher. Recently, Razborov \cite{Ra} using flag
algebras and Nikiforov \cite{Ni} using a combination of combinatorial and
analytic arguments gave an asymptotic answer in the cases $r=3$ and $r=4$,
respectively.

There is a simple upper bound on the minimum $H$-density in terms of the edge
density. Suppose that $H$ has $m$ edges. By taking $G$ to be a random graph
with edge density $p$, it is easy to see that the minimum possible $H$-density
is at most $p^m$. The beautiful conjectures of Erd\H{o}s and Simonovits
\cite{Sim} and Sidorenko \cite{Si3} suggest that this bound is sharp for
bipartite graphs. That is, for any bipartite $H$ there is a $\gamma(H)>0$ such
that the number of copies of $H$ in any graph $G$ on $N$ vertices with edge
density $p >N^{-\gamma(H)}$ is asymptotically at least the same as in the
$N$-vertex random graph with edge density $p$. This is known to be true in a
few very
special cases, e.g., for complete bipartite graphs, trees, even cycles
(see \cite{Si3}) and, recently, for cubes \cite{Ha}.

The original formulation of the conjecture by Sidorenko is in terms of graph
homomorphisms. A {\it homomorphism} from a graph $H$ to a graph $G$ is a
mapping
$f:V(H) \rightarrow V(G)$ such that, for each edge $(u,v)$ of $H$,
$(f(u),f(v))$
is an edge of $G$. Let $h_H(G)$ denote the number of homomorphisms from $H$ to
$G$. We also consider the normalized function $t_H(G)=h_H(G)/|G|^{|H|}$, which
is the fraction of mappings $f:V(H) \rightarrow V(G)$ which
are homomorphisms.
\begin{conjecture}\label{conj1} (Sidorenko) For every bipartite graph $H$ with
$m$ edges and every graph $G$, $$t_H(G) \geq
t_{K_2}(G)^m.$$
\end{conjecture}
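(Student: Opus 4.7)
The plan is to pass to an analytic reformulation. Let $W\colon[0,1]^2\to[0,1]$ be a symmetric edge-weight function representing $G$, so that the conjecture becomes
\[
t_H(W):=\int_{[0,1]^{V(H)}}\prod_{(u,v)\in E(H)}W(x_u,x_v)\prod_{u\in V(H)}dx_u\;\ge\;\Bigl(\int_{[0,1]^2}W\Bigr)^{e(H)}=:p^{e(H)}.
\]
I would attempt this by induction on the structure of $H$, peeling vertices one at a time and combining Jensen's and H\"older's inequalities.

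As a warm-up I would check the tree case, which is essentially folklore: root a tree $T$ at a vertex, integrate over the leaves first (each leaf $\ell$ with parent $u$ yields the degree function $d_W(x_u)=\int W(x_u,y)\,dy$), and recurse toward the root; applying Jensen to $z\mapsto z^k$ at each internal vertex gives $t_T(W)\ge p^{e(T)}$. This suggests the general strategy: peel off one bipartition class at a time and control the resulting weighted integrals with H\"older.

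The main case I would next attack is bipartite $H=(A\cup B,E)$ admitting a vertex $v\in A$ adjacent to \emph{every} $b\in B$. Fixing the image $x$ of $v$, the remaining homomorphism count is
\[
F(x):=\int_{[0,1]^{V(H)\setminus\{v\}}}\prod_{b\in B}W(x,x_b)\prod_{(a,b)\in E(H-v)}W(x_a,x_b)\,\prod_{u\neq v}dx_u,
\]
and $t_H(W)=\int F(x)\,dx$. A H\"older/Cauchy--Schwarz manoeuvre on the $B$-factors (treating $\prod_{b}W(x,x_b)$ as a power of a marginal) should combine the edge contributions of $v$ with a residual hom-count on $H-v$; the hope is that the complete-neighbourhood structure at $v$ provides just enough symmetry to close an induction on $e(H)$.

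Finally, to deduce an approximate version of Conjecture~\ref{conj1} for all bipartite $H$, I would try to embed $H$ as a subgraph of a larger bipartite $H^\ast$ that does have a vertex complete to one side, and then relate $t_H(W)$ to a power of $t_{H^\ast}(W)$ via H\"older. The main obstacle will be precisely this reduction step: for a general bipartite graph there is no canonical way to peel off vertices while preserving the product-of-edge-weights structure, and typical Cauchy--Schwarz arguments close cleanly only when $H$ has significant symmetry (even cycles, complete bipartite graphs). Reducing the general case to the complete-vertex case with only a mild, controllable loss is the crux of the argument I would need to find.
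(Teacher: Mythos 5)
The statement you are addressing is Sidorenko's conjecture itself, which the paper does not prove and which remains open; the paper establishes only the special case where $H$ has a vertex complete to the other part (Theorem \ref{main}, via Theorem \ref{main1}) and an approximate version with exponent $m+w$ for general $H$ (Corollary \ref{cor}). Your proposal, by your own wording (``the hope is that\dots'', ``the crux of the argument I would need to find''), is a plan rather than a proof, and its two decisive steps are exactly where it fails. First, for $H$ with a vertex $v$ complete to the class $B$, the H\"older/Cauchy--Schwarz manoeuvre on the $B$-factors is not known to close: that is precisely the ``previous approach'' via clever applications of H\"older's inequality which succeeds only for very special graphs (trees, even cycles, complete bipartite graphs, cubes), and the paper explicitly abandons it. The actual proof of Theorem \ref{main1} runs through a counting version of dependent random choice (Lemma \ref{DRC}): one shows that, weighting by degree, most choices of images for the complete vertices have the property that all but a $\frac{1}{2n}$-fraction of $k$-tuples in their common neighborhood have common neighborhood of size at least $(2n)^{-2n}t_{K_{r,d}}(G)^{k/(rd)}N$; one then embeds the rest of $V_1$ greedily using a union-bound hypergraph embedding lemma (Lemma \ref{Embedding}), obtaining the bound up to a constant factor $(2n)^{-2n^2}$ (Lemma \ref{importantstep}); and finally one removes that constant by a tensor-power trick. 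Nothing in your sketch supplies a substitute for this mechanism, so even the special case is not established by your outline.

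Second, your proposed reduction of general bipartite $H$ to the complete-vertex case --- embedding $H$ into a larger $H^\ast$ and relating $t_H$ to a power of $t_{H^\ast}$ by H\"older --- goes in the wrong direction and is not needed for the approximate statement. The paper's reduction is elementary and lossy by design: add the $w$ missing edges at a vertex of minimum degree in the bipartite complement $\bar H$ to obtain $H' \supseteq H$ (same vertex set) having a vertex complete to the other part; since adding edges can only decrease homomorphism density, $t_H(G) \geq t_{H'}(G) \geq t_{K_2}(G)^{m+w}$, which is Corollary \ref{cor}, an approximate version of the conjecture rather than the conjecture itself. Your tree warm-up via Jensen is fine (it is the classical Blakley--Roy case), but it does not feed the induction you envisage, and no known argument of the H\"older/Jensen type proves the full conjecture.
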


Sidorenko's conjecture also has the following nice analytical form. Let $\mu$
be the Lebesgue measure
on $[0,1]$ and let $h(x,y)$ be a bounded, non-negative, symmetric and
measurable function on $[0,1]^2$. Let $H$ be a
bipartite graph with vertices $u_1,\ldots,u_t$ in the first part and vertices
$v_1,\ldots,v_s$ in the second part.
Denote by $E$ the set of edges of $H$, i.e., all the pairs $(i,j)$ such that
$u_i$ and $v_j$ are adjacent, and let
$m = |E|$. The analytic formulation of Sidorenko's conjecture states that
\begin{equation}\label{integralineq}
\int \prod_{(i,j) \in E} h(x_i,y_j) d\mu^{s+t} \geq \left(\int h d\mu^2
\right)^m.\end{equation}
The expression on the left hand side of this inequality is quite common. For
example,
Feynman integrals in quantum field theory, Mayer integrals in statistical
mechanics, and
multicenter integrals in quantum chemistry are of this form (see Section 6 of
\cite{Si91} and its references). Unsurprisingly then, Sidorenko's conjecture
has connections
to a broad range of topics, such as matrix theory \cite{AWM, BR}, Markov
chains \cite{BP, PP}, graph limits \cite{Lo}, and quasirandomness.

The study of quasirandom graphs was introduced by Thomason \cite{Th2} and
Chung, Graham, and Wilson \cite{CGW}. They showed that a large number of
interesting graph properties satisfied by random graphs are all equivalent.
This idea has been quite influential, leading to the study of quasirandomness
in other structures such as hypergraphs \cite{ChGr,Go3}, groups \cite{Go2},
tournaments, permutations, sequences and sparse graphs (see \cite{ChGr3} and it
references), and progress on problems in different areas (see,
e.g., \cite{Con,Go3,Go2}). It is closely related to Szemer\'edi's regularity
lemma and its recent hypergraph generalization and all proofs of Szemer\'edi's
theorem on long arithmetic progressions in dense subsets of the integers use
some notion of quasirandomness. Finally, there is also the fast-growing study
of properties of quasirandom graphs, which has recently
attracted lots of attention both in combinatorics and theoretical computer
science (see, e.g., \cite{KrSu}).

A sequence $(G_n:n=1,2,\ldots)$ of graphs is called {\it quasirandom with
density $p$} (where $0<p<1$) if, for every graph $H$,
\begin{equation}\label{eqquasi}
t_H(G_n)=(1+o(1))p^{|E(H)|}.\end{equation} Note that (\ref{eqquasi}) is
equivalent to saying that the $H$-density of $G_n$ is $(1+o(1))p^{|E(H)|}$,
since the
proportion of mappings from $V(H)$ to $V(G_n)$ which are not one-to-one tends
to $0$ as $|V(G_n)| \to \infty$. This property is equivalent to many other
properties shared by random graphs. One such property is that the edge density
between any two vertex subsets of $G_n$ of linear cardinality is $(1+o(1))p$. A
surprising fact, proved in \cite{CGW}, is that it is enough that
(\ref{eqquasi})
holds for $H=K_2$ and $H=C_4$ for a graph to be quasirandom. That is, a graph
with edge density $p$ is quasirandom with density $p$ if the $C_4$-density is
approximately $p^4$. A question of Chung, Graham, and Wilson \cite{CGW} which
has received considerable attention (see, e.g., \cite{Bo}) asks for which
graphs $H$ is it true that if (\ref{eqquasi}) holds for $K_2$ and $H$, then the
sequence is quasi-random with density $p$. Such a graph $H$ is called {\it
$p$-forcing}. We call $H$ {\it forcing} if it is $p$-forcing for all $p$.
Chung, Graham, and Wilson prove that even cycles $C_{2t}$ and complete
bipartite graphs $K_{2,t}$ with $t \geq 2$ are forcing. Skokan and Thoma
\cite{ST} generalize this result to all complete bipartite graphs $K_{a,b}$
with $a,b \geq 2$.

There are two simple obstacles to a graph being forcing. It is easy to show
that a forcing graph must be bipartite. Further, for any forest $H$,
(\ref{eqquasi}) is satisfied for any sequence of nearly regular graphs of edge
density tending to $p$. The property of being nearly regular is not as strong
as being quasirandom. Hence, a forcing graph must be bipartite and have at
least one cycle. Skokan and Thoma \cite{ST} ask whether these properties
characterize the forcing graphs.
We conjecture the answer is yes and refer to it as the {\it forcing
conjecture}.

\begin{conjecture}\label{forcingconj}
A graph $H$ is forcing if and only if it is bipartite and contains a cycle.
\end{conjecture}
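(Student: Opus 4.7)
The plan is as follows. The ``only if'' direction is handled by the two standard obstructions sketched in the paragraph preceding the conjecture: a non-bipartite $H$ has no copies in a complete bipartite host of density $1/2$, while a quasirandom graph of density $1/2$ has positive $H$-density, so such $H$ cannot be forcing; and if $H$ is a forest then $t_H(G_n) \to p^{|E(H)|}$ for every nearly regular sequence of density $p$, and such sequences need not be quasirandom (for instance, a balanced disjoint union of two quasirandom graphs of different densities). So the real content is the ``if'' direction.

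For that direction, suppose $H$ is bipartite with at least one cycle and $(G_n)$ is a sequence with $t_{K_2}(G_n) \to p$ and $t_H(G_n) \to p^{|E(H)|}$. I would pass to a convergent subsequence in the graphon metric to obtain a limit $W : [0,1]^2 \to [0,1]$ satisfying $\int W \, d\mu^2 = p$ and $t_H(W) = p^{|E(H)|}$, and aim to conclude that $W \equiv p$ almost everywhere. By Chung--Graham--Wilson, this is equivalent to quasirandomness, as it forces $t_{C_4}(G_n) \to p^4$.

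The strategy is to establish a stability form of Sidorenko's conjecture for $H$: the inequality $t_H(W) \geq \left(\int W\, d\mu^2 \right)^{|E(H)|}$ should be strict unless $W$ is constant. For the graphs covered by the paper's main Sidorenko theorem (those with a vertex complete to the opposite part), Sidorenko's inequality is derived by an iterated Cauchy--Schwarz / H\"older argument, and I would carefully trace through its equality cases. Each step forces proportionality of certain fibre functions of $W$; the combinatorial fact that $H$ contains a cycle then provides the rigidity needed to patch these local conditions into the global conclusion that $W$ is almost surely constant. For a graph such as $C_{2t}$ one can see the mechanism directly: $t_{C_{2t}}(W)$ is the $2t$-th moment of an integral operator, and equality in Jensen immediately pins down $W$; the task is to generalise this.

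The main obstacle is that Sidorenko's conjecture itself is open in the full generality claimed, so a complete proof of Conjecture \ref{forcingconj} as stated is beyond current methods. A realistic intermediate target, and what I expect to be the scope of this paper's contribution, is to prove forcing for every $H$ in the Sidorenko-provable class by upgrading the associated Cauchy--Schwarz chain to its equality case. A more ambitious route, which would bypass the need to prove Sidorenko for $H$ first, would be to bound $t_{C_4}(W)$ from above directly in terms of $t_H(W)$ and $\int W$ via some H\"older-type rearrangement exploiting a cycle of $H$; devising such an argument uniformly over all bipartite $H$ with a cycle is, I expect, the key difficulty.
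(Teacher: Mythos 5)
You were asked about Conjecture \ref{forcingconj}, which the paper does not prove: it is stated as a conjecture (formalizing a question of Skokan and Thoma) and remains open, so your assessment that a complete proof is beyond current methods is consistent with the paper. What the paper does prove toward it is Theorem \ref{main2}: $H$ is forcing whenever one part contains two vertices complete to the other part, which has at least two vertices. Its proof is genuinely different from your proposed route of passing to a graphon limit and upgrading an iterated Cauchy--Schwarz proof of Sidorenko to its equality/stability case. Instead, the paper proves the stronger inequality $t_H(G)\ge t_{K_{2,2}}(G)^{m/4}$ (Theorem \ref{main1} with $r=d=2$, established by dependent random choice plus a tensor-power trick, not by Cauchy--Schwarz); under the forcing hypotheses this yields $t_{K_{2,2}}(G)\le (1+o(1))p^4$, the trivial lower bound gives $t_{K_{2,2}}(G)\ge(1+o(1))p^4$, and the Chung--Graham--Wilson theorem that $C_4=K_{2,2}$ is forcing finishes; degree-one vertices are stripped off with a separate cleaning argument using $t_H(G)\ge t_{K_{1,2}}(G)^{m/2}$. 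In other words, the paper carries out exactly your ``more ambitious route'' of upper-bounding $t_{C_4}$ in terms of $t_H$, for this restricted class, and thereby avoids any equality-case or limit analysis.

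Two inaccuracies in your sketch are worth flagging. First, the ``realistic intermediate target'' of proving forcing for every $H$ in the Sidorenko-provable class of Theorem \ref{main} cannot be right: that class (one vertex complete to the other side) contains stars and other trees, which are never forcing; the additional hypotheses of Theorem \ref{main2} force $H\supseteq C_4$, which is what makes the conclusion possible. Second, your ``only if'' direction is not correct as stated: a complete bipartite host with $t_H=0$ does not witness that a non-bipartite $H$ fails to be forcing, since that host violates the hypothesis $t_H(G_n)\to p^{|E(H)|}$ in (\ref{eqquasi}); one must construct a non-quasirandom sequence satisfying both the $K_2$- and $H$-density conditions (e.g.\ by interpolating between two-block graphons). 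Likewise, a balanced disjoint union of two quasirandom graphs of \emph{different} densities is not nearly regular and already has the wrong star densities; the standard non-quasirandom example for forests is a union of two quasirandom graphs of the \emph{same} density, which is nearly regular but has vanishing cross density.
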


It is not hard to see that the forcing conjecture is stronger than Sidorenko's
conjecture, and it further gives a stability result for Sidorenko's conjecture.
A {\it stability} result not only characterizes the extremal graphs for an
extremal problem, but also shows that if a graph is close to being optimal for
the extremal problem, then it is close in a certain appropriate metric to an
extremal graph. In recent years, there has been a great amount of research done
toward proving stability results in extremal combinatorics. The forcing
conjecture implies that if $H$ is bipartite with $m$ edges and contains a
cycle, then $G$ satisfies $t_H(G)$ is close to $t_{K_2}(G)^m$ if and only if it
is quasirandom with density $t_{K_2}(G)$.

As consequences of the following theorem, we prove Sidorenko's conjecture and
the forcing conjecture for a large class of bipartite graphs.

\begin{theorem}\label{main1}
Let $H$ be a bipartite graph with $m$ edges which has $r \geq 1$ vertices in
the first part complete to the second part, and the minimum degree
in the first part is at least $d$. Then $$t_{H}(G) \geq
t_{K_{r,d}}(G)^{\frac{m}{rd}}.$$
\end{theorem}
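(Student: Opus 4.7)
My plan is to prove the bound $t_H(G)\geq t_{K_{r,d}}(G)^{m/(rd)}$ by combining an integral reformulation of $t_H(G)$ with Jensen's and Hölder's inequalities, plus an induction on the number of non-complete vertices in the first part of $H$.

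Writing $h:[0,1]^2\to[0,1]$ for the symmetric kernel representing $G$, and labeling the first part of $H$ as $\{a_1,\dots,a_r\}\cup\{b_1,\dots,b_k\}$ (with the $a_i$'s complete to the second part and each $b_\ell$ of degree $d_\ell\geq d$ and neighborhood $N(b_\ell)\subseteq\{1,\dots,s\}$), integration over the conditionally independent first-part variables yields
\[
t_H(G)=\int_{[0,1]^s} g(\mathbf{y})^{r}\prod_{\ell=1}^{k} g_{N(b_\ell)}(\mathbf{y})\,d\mathbf{y},
\]
where $g(\mathbf{y})=\int\prod_{j=1}^{s}h(x,y_j)\,dx$ and $g_{S}(\mathbf{y})=\int\prod_{j\in S}h(z,y_j)\,dz$. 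The base case $k=0$ is $H=K_{r,s}$ with $m=rs$: rewriting $t_{K_{r,s}}(G)=\int\phi(\mathbf{x})^{s}\,d\mathbf{x}$ with $\phi(\mathbf{x})=\int\prod_{i}h(x_i,y)\,dy$, the convexity of $u\mapsto u^{s/d}$ on $[0,\infty)$ (valid as $s\geq d$) and Jensen's inequality give $t_{K_{r,s}}(G)=\int(\phi^d)^{s/d}\,d\mathbf{x}\geq\bigl(\int\phi^d\,d\mathbf{x}\bigr)^{s/d}=t_{K_{r,d}}(G)^{s/d}$, which is exactly $t_{K_{r,d}}(G)^{m/(rd)}$.

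For the general case I would induct on $k$. Removing a non-complete vertex $b=b_k$ of degree $d_k$, the inductive hypothesis bounds $t_{H\setminus b}(G)\geq t_{K_{r,d}}(G)^{(m-d_k)/(rd)}$, so it suffices to prove the single-vertex bound
\[
t_H(G)\;\geq\;t_{H\setminus b}(G)\cdot t_{K_{r,d}}(G)^{d_k/(rd)}.
\]
I plan to apply Hölder's inequality to the outer $\mathbf{y}$-integral with exponents tuned to $d_k,d,r$, exploiting that $r\geq 1$ so that the reservoir factor $g(\mathbf{y})^r$ is available to absorb the partial-neighborhood factor $g_{N(b)}(\mathbf{y})$ into a $K_{r,d_k}$-density on the coordinates indexed by $N(b)$; the base-case bound $t_{K_{r,d_k}}(G)\geq t_{K_{r,d}}(G)^{d_k/d}$ then converts this to the target $K_{r,d}$-density with the correct exponent $d_k/(rd)$.

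The main obstacle is executing this Hölder step cleanly: one must choose exponents so that, after integrating out the coordinates $j\notin N(b)$ and invoking the base-case monotonicity for $K_{r,d_k}$, the extra factor is exactly $t_{K_{r,d}}(G)^{d_k/(rd)}$ while the leftover integral is exactly $t_{H\setminus b}(G)$. The hypothesis $r\geq 1$ is essential throughout---without a vertex complete to the second part, the partial factors $g_{N(b)}$ cannot be Hölder-dominated by a smooth reservoir, which is also the structural reason why Sidorenko's conjecture remains open in general.
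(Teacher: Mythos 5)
Your base case is fine: $t_{K_{r,s}}(G)=\int\phi^{s}\geq\bigl(\int\phi^{d}\bigr)^{s/d}=t_{K_{r,d}}(G)^{s/d}$ is a correct application of Jensen. The fatal problem is the inductive step. The single-vertex bound you reduce to, $t_H(G)\geq t_{H\setminus b}(G)\cdot t_{K_{r,d}}(G)^{d_k/(rd)}$, is false in general, and a counterexample lives inside the very class of graphs the theorem covers. Take $H=P_3$, the path with three edges $v_1v_2v_3v_4$, with first part $\{v_1,v_3\}$: here $r=1$ (the vertex $v_3$ is complete to the second part), $d=1$, and the non-complete vertex $b=v_1$ has $d_k=1$. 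Your inductive step then asserts $t_{P_3}(G)\geq t_{P_2}(G)\,t_{K_2}(G)$, which is exactly the correlation inequality (\ref{nonnegativecor}) with $H_1=P_2$, $H_2=P_1$ that the paper's concluding remarks point out was refuted by London in 1966 (and failure for a kernel transfers to failure for graphs). So no choice of H\"older exponents can make your "absorption" step work: the statement you are trying to prove at that stage is simply not true, and the presence of the reservoir factor $g(\mathbf{y})^r$ in the integrand of $t_H$ does not help, because the claimed inequality already quantifies over the same $H$. Note also that H\"older gives upper bounds on integrals of products, so even structurally the step was only a hope, which you acknowledge; the hoped-for conclusion is the false part. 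The theorem itself is not of a form that factors through peeling off one first-part vertex at a time: $t_H\geq t_{K_{r,d}}^{m/(rd)}$ and $t_{H\setminus b}\geq t_{K_{r,d}}^{(m-d_k)/(rd)}$ are both true, but the ratio inequality connecting them fails.

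For contrast, the paper avoids any such intermediate correlation inequality. It embeds the $r$ complete vertices onto a ``good'' $r$-tuple $T$ (found by a dependent random choice counting argument: for most $T$, weighted by $|N(T)|^d$, almost all small tuples inside $N(T)$ have common neighborhood of size at least a constant times $t_{K_{r,d}}(G)^{k/(rd)}N$), then embeds the entire second part of $H$ into $N(T)$ at once while avoiding the rare tuples, and finally places the remaining first-part vertices greedily; this yields the bound up to a factor $(2n)^{-2n^2}$, which is removed by the tensor power trick. If you want an analytic route, you would need to handle all non-complete first-part vertices simultaneously (conditioning on the image of the whole second part), not vertex by vertex.
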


From Theorem \ref{main1}, by taking $r=1$ and $d=1$, we have the following
corollary.

\begin{theorem}\label{main}
Sidorenko's conjecture holds for every bipartite graph $H$ which has a vertex
complete to the other part.
\end{theorem}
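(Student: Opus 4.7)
The plan is to derive Theorem~\ref{main} as the special case of Theorem~\ref{main1} obtained by setting $r = 1$ and $d = 1$. Given a bipartite graph $H$ with $m$ edges containing a vertex $u$ complete to the other part, my first step is to observe that isolated vertices of $H$ may be removed without affecting $m$, $t_H(G)$, or the hypothesis that $u$ is complete to the other part (isolated vertices contribute a factor of $1$ to $t_H(G)$ after the normalization by $|G|^{|H|}$). After this cleanup every vertex in the first part has degree at least one, so the minimum degree in the first part is at least $d = 1$, and the distinguished vertex $u$ plays the role of the single ($r = 1$) vertex complete to the second part.

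Applying Theorem~\ref{main1} with these parameters yields
$$t_H(G) \geq t_{K_{1,1}}(G)^{m/(1 \cdot 1)} = t_{K_{1,1}}(G)^{m}.$$
Since $K_{1,1}$ is a single edge $K_2$, we have $t_{K_{1,1}}(G) = t_{K_2}(G)$, so the inequality becomes $t_H(G) \geq t_{K_2}(G)^{m}$, which is precisely Sidorenko's conjecture for $H$. This completes the derivation.

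The main obstacle in this plan is of course not the reduction, which is essentially bookkeeping, but the underlying Theorem~\ref{main1}, which carries all the substantive content; the interesting aspect of the statement above is that the paper has set things up so that Sidorenko follows effortlessly once the stronger density bound $t_{K_{r,d}}(G)^{m/(rd)}$ is in hand. For Theorem~\ref{main1} itself I would work with the analytic form of the conjecture, viewing $G$ via its graphon $h$ on $[0,1]^2$, and argue by induction on the number of vertices of the first part that are not already complete to the second. The base case, in which $H = K_{r,s}$, reduces to the power-mean inequality after rewriting $t_{K_{r,s}}(G)$ as $\int Q(\vec x)^{s}\,d\vec x$ with $Q(\vec x) = \int \prod_{i \leq r} h(x_i,y)\,dy$ and $\int Q(\vec x)\,d\vec x = \int q(y)^r\,dy$. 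The inductive step would delete a non-complete vertex $u_i$ and apply H\"older's inequality with exponents chosen so that the ratio $t_H(G)/t_{H - u_i}(G)$ is bounded below by $t_{K_{r,d}}(G)^{\deg(u_i)/(rd)}$, which is exactly what the induction demands; picking those exponents correctly so that the $r$ complete vertices provide enough multiplicity to absorb the missing edges at $u_i$ is where I expect the real difficulty to lie.
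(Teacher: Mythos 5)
Your derivation is exactly the paper's: Theorem \ref{main} is obtained by applying Theorem \ref{main1} with $r=d=1$, and your preliminary removal of isolated vertices (which necessarily lie in the first part, since the distinguished vertex is complete to the second part) is a harmless extra precaution that does not change $m$ or $t_H(G)$. Your closing sketch of an analytic/H\"older induction proof of Theorem \ref{main1} departs from the paper's dependent-random-choice and tensor-power argument, but that is immaterial here, since the statement in question is deduced in the paper, as in your proposal, simply by invoking Theorem \ref{main1}.
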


From Theorem \ref{main}, we may easily deduce an approximate version of
Sidorenko's conjecture for all graphs. For a connected bipartite graph $H$
with parts $V_1,V_2$, define the bipartite graph $\bar{H}$ with parts
$V_1,V_2$ such that $(v_1,v_2) \in V_1 \times V_2$ is an edge of $\bar{H}$ if
and only if it is not an edge of $H$. Define the {\it width} of $H$ to be the
minimum degree  of $\bar{H}$. If $H$ is not connected, the width of $H$ is
the sum of the widths of the connected components of $H$. Note that the width
of a connected bipartite graph is $0$ if and only if it has a vertex that is
complete to the other part. Also, the width of a bipartite graph with $n$
vertices is at most $n/2$.

\begin{corollary}\label{cor}
If $H$ is a bipartite graph with $m$ edges and width $w$, then $t_H(G) \geq
t_{K_2}(G)^{m+w}$ holds for every graph $G$.
\end{corollary}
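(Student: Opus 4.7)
The plan is to deduce the corollary directly from Theorem~\ref{main} by adding a few edges to $H$ to create a graph in which some vertex is complete to the other part, while keeping control of the total edge count.

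First I would reduce to the connected case. Since a homomorphism from a disconnected graph is just an independent choice of homomorphism on each component, $t_H(G)=\prod_i t_{H_i}(G)$ whenever $H$ is the disjoint union of $H_1,\ldots,H_k$. Because the width and the number of edges are both additive over components by definition, it suffices to prove the inequality for each connected $H_i$ separately and then multiply.

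So assume $H$ is connected bipartite with parts $V_1, V_2$, $m$ edges and width $w$. By the definition of width, there is a vertex $v \in V_1$ (say) whose degree in $\bar{H}$ equals $w$, i.e., which is non-adjacent in $H$ to exactly $w$ vertices of $V_2$. Form the bipartite graph $H'$ on the same vertex set $V_1 \cup V_2$ by adding precisely these $w$ missing edges at $v$. Then $H'$ has $m+w$ edges, and $v \in V_1$ is now complete to $V_2$, so $H'$ satisfies the hypothesis of Theorem~\ref{main}. Hence
$$t_{H'}(G) \geq t_{K_2}(G)^{m+w}.$$

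Finally, since $H$ is a subgraph of $H'$ on the same vertex set, every homomorphism $f\colon V(H') \to V(G)$ is automatically a homomorphism $f\colon V(H) \to V(G)$, giving $h_H(G) \geq h_{H'}(G)$ and therefore $t_H(G) \geq t_{H'}(G) \geq t_{K_2}(G)^{m+w}$, as required. There is really no substantive obstacle here: the only point worth verifying is that the connected case is indeed reducible to Theorem~\ref{main} by adding exactly $w$ edges at the right vertex, which is immediate from the definition of width; everything else is formal (multiplicativity of $t_H$ over components, and monotonicity of $h_H(G)$ under deletion of edges of $H$).
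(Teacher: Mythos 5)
Your proposal is correct and is essentially the paper's own argument: reduce to connected $H$ via multiplicativity of $t_H$ over components, then make the vertex of minimum degree in $\bar{H}$ complete to the other part, adding exactly $w$ edges, and apply Theorem~\ref{main} together with $t_H(G)\geq t_{H'}(G)$. Nothing further is needed.
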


We also obtain the following result from Theorem \ref{main1}.

\begin{theorem}\label{main2}
The forcing conjecture holds for every bipartite graph $H$ which has two
vertices in one part complete to the other part, which has at least two
vertices.
\end{theorem}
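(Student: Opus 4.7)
The plan is to derive Theorem \ref{main2} from Theorem \ref{main1} combined with the classical fact that $C_4$ is forcing. Let $H$ be a bipartite graph with parts $V_1, V_2$ such that two vertices $u_1, u_2 \in V_1$ are complete to $V_2$ and $|V_2| \ge 2$; write $m = |E(H)|$. Let $(G_n)$ be a sequence of graphs with $t_{K_2}(G_n) \to p$ and $t_H(G_n) \to p^m$; our task is to prove that $(G_n)$ is quasirandom with density $p$.

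The heart of the argument is to apply Theorem \ref{main1} with $r = 2$ and $d = 2$, which requires the minimum degree in $V_1$ to be at least $2$. Under that hypothesis the theorem yields $t_H(G) \ge t_{C_4}(G)^{m/4}$ for every $G$, so $t_{C_4}(G_n) \le p^4 + o(1)$. The matching Sidorenko lower bound $t_{C_4}(G_n) \ge t_{K_2}(G_n)^4 = p^4 + o(1)$ is a case of Theorem \ref{main}, since $C_4$ has a vertex complete to the other part. These bounds pin down $t_{C_4}(G_n) = p^4 + o(1)$, and the Chung--Graham--Wilson theorem that $C_4$ is forcing completes the argument.

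The remaining difficulty is that $V_1$ might contain vertices of degree $1$, ruling out the choice $d = 2$ above. To handle this, first apply Theorem \ref{main1} with $r = 2$ and $d = 1$ to obtain $t_H(G) \ge t_{P_3}(G)^{m/2}$; the hypothesis then forces $t_{P_3}(G_n) = p^2 + o(1)$, which together with $t_{K_2}(G_n) \to p$ implies that the degree sequence of $G_n$ concentrates around $pN$. Let $H^*$ denote the bipartite graph obtained from $H$ by iteratively removing degree-$1$ vertices of $V_1$, and set $m^* = |E(H^*)|$. Then $H^*$ still contains $u_1, u_2$ complete to $V_2$, and now has minimum degree at least $2$ in $V_1$. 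Near-regularity of $G_n$ lets us peel off the degree-$1$ vertices at the cost of a factor $p^{m-m^*}$, giving
\[
t_H(G_n) = (p^{m-m^*} + o(1))\,t_{H^*}(G_n),
\]
so $t_{H^*}(G_n) \to p^{m^*}$. Applying the main-case argument to $H^*$ then yields that $(G_n)$ is quasirandom with density $p$.

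The principal technical point is this last reduction: one must show that the degree weights $\prod d_{G_n}(f(v_u))$ coming from the deleted degree-$1$ vertices concentrate at $(pN)^{m-m^*}$ uniformly over the homomorphic images $f : V(H^*) \to V(G_n)$, which follows from the near-regularity via a short second-moment calculation.
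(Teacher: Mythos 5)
Your proposal is correct and follows essentially the same route as the paper: the case of minimum degree at least $2$ in $V_1$ via Theorem \ref{main1} with $r=d=2$ and the forcing property of $K_{2,2}=C_4$, and the degree-$1$ case via Theorem \ref{main1} with $r=2$, $d=1$, degree concentration from the $K_{1,2}$-count, and peeling the degree-$1$ vertices to reduce to the first case. The only (immaterial) differences are notational ($P_3$ versus $K_{1,2}$) and that you pin down $t_{H^*}(G_n)$ using two-sided degree concentration, where the paper uses the one-sided extension bound together with Sidorenko's inequality for $H'$.
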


Sidorenko further conjectured that the assumption that $h$ is
symmetric in (\ref{integralineq}) can be dropped. This has the following
equivalent discrete version. For bipartite graphs $H=(V_1,V_2,E)$ and
$G=(U_1,U_2,F)$ where $H$ has $m=|E|$ edges and $G$ has edge density
$p=\frac{|F|}{|U_1||U_2|}$ between its parts, the density of mappings $f:V(H)
\to V(G)$ with $f(V_i) \subset U_i$ for $i=1,2$ that are homomorphisms is at
least $p^m$. It is not hard to check
that the proofs of Theorems \ref{main1}, \ref{main}, and \ref{main2} and
Corollary \ref{cor} can be extended to prove stronger asymmetric versions. We
leave the details to the interested reader.

Although several authors (e.g., Sidorenko \cite{Si91} and Lov\'asz \cite{Lo})
suggested that one might need an analytic approach to attack Conjecture
\ref{conj1}, our proof of Theorem \ref{main1} given in the next section uses
simple combinatorial tools. We conclude with a discussion of some related
problems.

\section{Proof of Theorem \ref{main1}}

We begin with some simple observations. For a vertex $v$ in a graph $G$, the
{\it neighborhood} $N(v)$ is the set of vertices adjacent
to $v$. For a sequence $S$ of vertices of a graph $G$, the {\it common
neighborhood} $N(S)$
is the set of vertices adjacent to every vertex in $S$. The identity
\begin{eqnarray}\label{simpleob}
h_{K_{a,b}}(G)=\sum_{T} |N(T)|^a,
\end{eqnarray} where the sum is over all sequences $T$ of $b$ vertices of $G$,
follows by counting homomorphisms of $K_{a,b}$ which fix the second part. Here
we allow sequences of vertices to include repeated vertices.

The previous approaches toward proving Sidorenko's conjecture mainly used
clever applications of H\"{o}lder's inequality. We propose a new approach using
a
probabilistic technique known as dependent random choice (for more details,
see, e.g., the survey \cite{FoSu}). The first attempt
to use this technique to estimate subgraph densities was made in \cite{FoSu2}.
Roughly, the idea is that most small subsets of the neighborhood of a random
subset of vertices have large common neighborhood. Our proof uses an equivalent
counting version.

Before going into the details of the proof of Theorem \ref{main1}, we first
give a brief outline of the proof idea in the case $r=1$ and $d=1$. Suppose $u$
is a vertex in the bipartite graph $H=(V_1,V_2,E)$ on $n$ vertices which is
complete to $V_2$, and $G$ is a graph on $N$ vertices with edge density $p$.
The bulk of the proof is geared toward proving a seemingly weaker result, Lemma
\ref{importantstep}, which shows that the bound in Theorem \ref{main1} is tight
apart from a positive constant factor which only depends on $n=|H|$. To obtain
this result, we use dependent random choice to show that (see Lemma \ref{DRC})
an average vertex $v$ of $G$ (weighted by its degree) has the property that
almost all small subsets $S$ of $N(v)$ satisfy $|N(S)| \geq c_np^{|S|}N$,
which, apart from the factor $c_n$, is the expected size of the common
neighborhood of a subset of vertices of size $|S|$ in the random graph
$G(N,p)$. We will give a lower bound on the number of homomorphisms $f:V(H)
\rightarrow V(G)$ from $H$ to $G$ as follows. We first pick $f(u)=v$ so that
almost all small subsets of vertices in $N(v)$ have large common neighborhood.
Having picked $f(u)=v$, we then randomly pick a sequence of $|V_2|$ vertices
from $N(v)$ to be $f(V_2)$. With large probability, for all subsets $S \subset
f(V_2)$, we have $|N(S)| \geq c_np^{|S|}N$. For any vertex $u' \in V_1
\setminus \{u\}$, we can pick $f(u')$ to be any vertex in the common
neighborhood of $S=f(N(u')) \subset f(V_2)$. To summarize, we get a lower bound
on the number
of homomorphisms from $H$ to $G$ by first choosing the image of $u$, then the
image of $V_2$, and finally the image of the remaining vertices in $V_1$. The
homomorphism count is within a positive constant factor, depending only on $n$,
of $p^mN^n$, which is asymptotically the expected homomorphism count in $G(N,p)$. We complete
the proof of Theorem \ref{main1} by using a tensor power trick to
get rid of the factor depending on $n$.

\begin{lemma} \label{DRC}
Let $G$ be a graph with $N$ vertices and $d$, $n$, and $r$ be positive
integers with $d \leq n$. Call a sequence $S$ of $k$ vertices of $G$ rare if
$|N(S)| \leq (2n)^{-2n}t_{K_{r,d}}(G)^{\frac{k}{rd}}N$. Call a sequence
$T=(v_1,\ldots,v_r)$ of $r$ vertices
bad with respect to $k$ if the number of rare sequences of $k$ vertices in
$N(T)$ is at least
$\frac{1}{2n}|N(T)|^k$. Call $T$ good if, for all $d \leq k \leq n$, it is not
bad with respect to $k$. Then the sum of $|N(T)|^d$ over all good sequences $T$
is at least
$h_{K_{r,d}}(G)/2$.
\end{lemma}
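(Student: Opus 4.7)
The plan is to use identity (\ref{simpleob}) in the symmetric form $h_{K_{r,d}}(G)=\sum_T |N(T)|^d$, where the sum runs over sequences $T$ of $r$ vertices of $G$, and to bound the contribution of the non-good sequences by a union bound over $k\in\{d,d+1,\dots,n\}$. Concretely, I would aim to show that for each such $k$,
$$\sum_{T \text{ bad w.r.t.\ } k} |N(T)|^d \leq \frac{1}{2n}\, h_{K_{r,d}}(G).$$
Since a sequence fails to be good exactly when it is bad with respect to some $k\in[d,n]$, and there are at most $n$ such values, summing yields a non-good contribution of at most $\frac{1}{2} h_{K_{r,d}}(G)$, from which the lemma follows.

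For a fixed $k$, I would prove this per-$k$ estimate by double counting pairs $(T,S)$, where $T$ is a sequence of $r$ vertices, $S$ is a rare sequence of $k$ vertices, and every vertex of $S$ is adjacent to every vertex of $T$ (equivalently, $T\in N(S)^r$). For each rare $S$ there are $|N(S)|^r$ such $T$, and rarity bounds this by $(2n)^{-2nr}\,t_{K_{r,d}}(G)^{k/d}N^r$; since there are at most $N^k$ choices of $S$, the total number of pairs is at most $(2n)^{-2nr}\,t_{K_{r,d}}(G)^{k/d}\,N^{r+k}$. On the other hand, every $T$ that is bad with respect to $k$ contributes at least $\frac{1}{2n}|N(T)|^k$ such pairs, yielding
$$\sum_{T \text{ bad w.r.t.\ } k} |N(T)|^k \leq 2n\cdot (2n)^{-2nr}\, t_{K_{r,d}}(G)^{k/d}\,N^{r+k}.$$

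To transition from $|N(T)|^k$ back to $|N(T)|^d$, I would apply H\"older's inequality with exponents $k/d$ and $k/(k-d)$, together with the trivial bound that there are at most $N^r$ sequences $T$, to obtain
$$\sum_{T \text{ bad}} |N(T)|^d \leq \Big(\sum_{T \text{ bad}} |N(T)|^k\Big)^{d/k} N^{r(k-d)/k}.$$
Substituting the previous estimate and collecting exponents, the powers of $N$ combine to $N^{r+d}$ and the density factor to $t_{K_{r,d}}(G)$, leaving a residual of $(2n)^{-d(2nr-1)/k}$. Since $d,r\geq 1$ and $k\leq n$, the exponent $d(2nr-1)/k$ is at least $(2n-1)/n\geq 1$, so this residual is at most $(2n)^{-1}$, giving exactly the per-$k$ bound $\frac{1}{2n}h_{K_{r,d}}(G)$. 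I expect the only non-routine point to be precisely this H\"older exchange, and verifying that the generous factor $(2n)^{-2n}$ in the definition of ``rare'' does propagate correctly uniformly over every $k\in[d,n]$; the clean bilinear form of identity (\ref{simpleob}) does the rest of the work.
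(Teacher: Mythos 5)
Your proposal is correct and is essentially the paper's own argument: the same double count of pairs $(T,S)$ (the paper's $X_k$), the same upper bound from rarity and lower bound from badness, and your H\"older step with exponents $k/d$ and $k/(k-d)$ is exactly the convexity (power-mean) inequality the paper invokes, followed by the same union bound over the at most $n$ values of $k$. The only cosmetic difference is phrasing the exponent exchange as H\"older rather than convexity (note the degenerate but harmless case $k=d$), so nothing further is needed.
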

\begin{proof}
We write $T \sim k$ to denote that $T$ is bad with respect to $k$. Let $X_k$
denote the number of pairs $(T,S)$ with $S$ a rare sequence of $k$ vertices and
$T$ a
sequence of $r$ vertices which are adjacent to every vertex in $S$. For a given
sequence $S$, the number of pairs $(T,S)$ with $T$ a sequence of $r$ vertices
adjacent to every vertex in $S$ is $|N(S)|^r$. As the number of sequences $S$
of $k$ vertices is $N^k$, we have, by summing over rare $S$,
\begin{equation}\label{eqfirst}X_k=\sum_{S~\textrm{rare}}|N(S)|^r \leq N^k
\cdot \left((2n)^{-2n}t_{K_{r,d}}(G)^{\frac{k}{rd}}N\right)^r
=(2n)^{-2nr}t_{K_{r,d}}(G)^{\frac{k}{d}}N^{k+r}.\end{equation}
Of course, $X_k$ is at least the number of such pairs $(T,S)$ with $T$ having
the additional property that it is bad with respect to $k$. Hence,
by summing over such $T$, we have

\begin{equation}\label{eqsecond}X_k \geq \sum_{T, T \sim k}\frac{1}{2n}
| N(T)|^k \geq \frac{1}{2n} N^r\left(\sum_{T, T \sim k}
| N(T)|^d/N^r\right)^{k/d} = \frac{1}{2n} N^{r-rk/d}\left(\sum_{T, T \sim k}
| N(T)|^d\right)^{k/d}.\end{equation}
The second inequality follows from convexity of the function $f(x)=x^{k/d}$
together with the fact that
there are at most $N^r$ sequences $T$. From
(\ref{eqfirst}) and (\ref{eqsecond}) and simplifying, we get

\begin{eqnarray*}\sum_{T,T\sim k} |N(T)|^d & \leq &
\left(2nN^{\frac{rk}{d}-r}X_k\right)^{d/k} \leq
\left(2nN^{\frac{rk}{d}-r}(2n)^{-2nr}t_{K_{r,d}}(G)^{\frac{k}{d}}N^{k+r}\right)^{d/k}
\\ & = &(2n)^{(1-2nr)d/k}t_{K_{r,d}}(G)N^{d+r} \leq
\frac{1}{2n}t_{K_{r,d}}(G)N^{d+r}=\frac{1}{2n}h_{K_{r,d}}(G).\end{eqnarray*}
Hence, using (\ref{simpleob}), we have $$\sum_{T~\textrm{good}} |N(T)|^d \geq
\sum_{T} |N(T)|^d - \sum_{k=1}^n
\sum_{T,T\sim k} |N(T)|^d \geq h_{K_{r,d}}(G)-n \cdot
\frac{1}{2n}h_{K_{r,d}}(G) = h_{K_{r,d}}(G)/2.$$
\end{proof}

The bound on $|N(S)|$ in the definition of a rare sequence $S$ in the above
lemma is quite natural. Indeed, in the case $G=G(N,p)$, $t_{K_{r,d}}(G)\approx
p^{rd}$ and a sequence $S$ of order $k$ is rare if $|N(S)| \leq
c_nt_{K_{r,d}}(G)^{\frac{k}{rd}}N \approx c_np^kN$ with $c_n=(2n)^{-2n}$,
which, apart from the constant factor $c_n$, is roughly the size of the common
neighborhood of every subset of order $k$.

A {\it hypergraph} $\mathcal{H}=(V,E)$ consists of a set $V$ of vertices and a
set $E$ of edges, which are subsets of $V$.
A {\it strongly directed hypergraph} $\mathcal{H}=(V,E)$ consists of a set $V$
of vertices and a set $E$ of edges, which are sequences of vertices in $V$.

\begin{lemma} \label{Embedding}
Suppose $\mathcal{H}$ is a hypergraph with $h$ vertices and at most $e$ edges
such that each edge has size at least $d$,
and $\mathcal{G}$ is a strongly directed hypergraph on $N$ vertices with the
property that for
each $k$, $d \leq k \leq h$, the number of sequences of $k$ vertices of
$\mathcal{G}$ that do not form an edge of $\mathcal{G}$ is at most
$\frac{1}{2e}N^k$. Then the number of homomorphisms from $\mathcal{H}$ to
$\mathcal{G}$ is at least $\frac{1}{2}N^h$.
\end{lemma}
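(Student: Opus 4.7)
The plan is to bound the number of \emph{bad} maps $f\colon V(\mathcal{H})\to V(\mathcal{G})$ (those that fail to send some edge of $\mathcal{H}$ to an edge of $\mathcal{G}$) by a simple union bound over the edges of $\mathcal{H}$, and then subtract from the total $N^h$.

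Fix an (arbitrary) ordering of the vertices of each edge of $\mathcal{H}$, so that each edge of size $k$ becomes a $k$-sequence, and a homomorphism to $\mathcal{G}$ is a map whose image on every such edge is an edge (i.e., a sequence) of $\mathcal{G}$. The total number of maps $V(\mathcal{H})\to V(\mathcal{G})$ is $N^h$. For a single edge $\varepsilon$ of $\mathcal{H}$ of size $k$, with $d\le k\le h$, the maps that send $\varepsilon$ to a non-edge of $\mathcal{G}$ are obtained by choosing the restriction to $\varepsilon$ among the at most $\tfrac{1}{2e}N^k$ non-edge sequences of length $k$ in $\mathcal{G}$, and the restriction to the remaining $h-k$ vertices arbitrarily; there are at most
\[
\tfrac{1}{2e}N^k\cdot N^{h-k}=\tfrac{1}{2e}N^h
\]
such maps. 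Summing over the at most $e$ edges of $\mathcal{H}$, the number of maps that fail on at least one edge is at most $e\cdot \tfrac{1}{2e}N^h=\tfrac12 N^h$, so at least $\tfrac12 N^h$ maps are homomorphisms.

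The only subtlety is making sure the edge-size hypothesis $k\ge d$ is compatible with the hypothesis of the lemma (which only bounds non-edge counts for sequences of length between $d$ and $h$); this is exactly why each edge is assumed to have size at least $d$, and since $\mathcal{H}$ has $h$ vertices the edges have size at most $h$, so the bound applies to every edge. I do not expect any real obstacle here — this is a clean union bound, and the purpose of the lemma is purely to package the dependent random choice output of Lemma~\ref{DRC} into a convenient embedding statement to be applied in the proof of Theorem~\ref{main1}.
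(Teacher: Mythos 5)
Your proof is correct and is essentially the same as the paper's: both are a union bound over the edges of $\mathcal{H}$, with the paper phrasing it probabilistically (a uniformly random map fails on a given edge with probability at most $\frac{1}{2e}$, so succeeds on all edges with probability at least $\frac12$) and you phrasing it as a direct count of bad maps. The small point you raise about fixing an ordering of each edge of $\mathcal{H}$ is a reasonable clarification of what ``homomorphism'' means here, but it does not alter the argument.
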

\begin{proof}
Consider a random mapping from the vertices of $\mathcal{H}$ to the vertices of
$\mathcal{G}$. The probability that a given edge of $\mathcal{H}$ does not map
to an edge of $\mathcal{G}$ is at most $\frac{1}{2e}$. By the union bound, the
probability that there is an edge of $\mathcal{H}$ that does not map to an edge
of $\mathcal{G}$ is at most $e \cdot \frac{1}{2e}=1/2$. Hence, with probability
at least $1/2$, a random mapping gives a homomorphism, so there are at least
$\frac{1}{2}N^h$ homomorphisms from $\mathcal{H}$ to $\mathcal{G}$.
\end{proof}

\begin{lemma}\label{importantstep}
Let $H=(V_1,V_2,E)$ be a bipartite graph with $n$ vertices and $m$ edges such
that there are $r$ vertices $u_1,\ldots,u_r \in V_1$ which are adjacent to all
vertices in $V_2$, and the minimum degree in $V_1$ is at least $d$. Then, for
every graph $G$, $$t_{H}(G) \geq (2n)^{-2n^2}t_{K_{r,d}}(G)^{\frac{m}{rd}}.$$
\end{lemma}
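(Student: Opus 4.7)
The plan is to build homomorphisms from $H$ into $G$ in three stages: use Lemma \ref{DRC} to pick the images of $u_1,\ldots,u_r$, use Lemma \ref{Embedding} to embed $V_2$ into their common neighborhood, and finally embed the remaining vertices of $V_1$ greedily. Write $t = |V_1|$ and $s = |V_2|$, so $t + s = n$. Since each $u_i$ is adjacent to all of $V_2$ and the minimum degree in $V_1$ is $d$, we have $s \geq d$; this inequality is what makes the final Jensen step go in the right direction.

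Fix a good sequence $T = (v_1,\ldots,v_r)$ supplied by Lemma \ref{DRC}. Any homomorphism $f$ with $(f(u_1),\ldots,f(u_r)) = T$ must send $V_2$ into $N(T)$. I would form the hypergraph $\mathcal{H}$ on vertex set $V_2$ whose edges are the sets $N_H(u')$ for $u' \in V_1 \setminus \{u_1,\ldots,u_r\}$ (at most $t - r \leq n$ edges, each of size at least $d$), together with the strongly directed hypergraph $\mathcal{G}$ on $N(T)$ whose edges are the non-rare sequences. Goodness of $T$ says that for each $k$ with $d \leq k \leq n$ the number of rare sequences of length $k$ in $N(T)$ is at most $\frac{1}{2n}|N(T)|^k$, matching the hypothesis of Lemma \ref{Embedding} with $e = n$. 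That lemma then produces at least $\frac{1}{2}|N(T)|^s$ maps $V_2 \to N(T)$ for which every image $f(N_H(u'))$ is non-rare, and each such map extends to a homomorphism by placing $f(u')$ in the common $G$-neighborhood of $f(N_H(u'))$, which has size at least $(2n)^{-2n} t_{K_{r,d}}(G)^{d_{u'}/(rd)} N$ by non-rareness, where $d_{u'}$ is the degree of $u'$ in $H$.

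Multiplying these counts over $u' \in V_1 \setminus \{u_1,\ldots,u_r\}$, using $\sum_{u'} d_{u'} = m - rs$, and summing over good $T$ yields
\[ h_H(G) \geq \frac{1}{2} (2n)^{-2n(t-r)} t_{K_{r,d}}(G)^{(m-rs)/(rd)} N^{t-r} \sum_{T \text{ good}} |N(T)|^s. \]
To convert the power-$d$ bound $\sum_{T \text{ good}} |N(T)|^d \geq h_{K_{r,d}}(G)/2$ of Lemma \ref{DRC} into the power-$s$ bound I need, I would apply Jensen's inequality to the convex function $x \mapsto x^{s/d}$ on the (at most) $N^r$ good sequences, obtaining $\sum_{T \text{ good}} |N(T)|^s \geq N^{r(1-s/d)}(h_{K_{r,d}}(G)/2)^{s/d}$. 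Substituting $h_{K_{r,d}}(G) = t_{K_{r,d}}(G) N^{r+d}$ and collecting terms, the $t_{K_{r,d}}(G)$-exponents add to $m/(rd)$ and the $N$-powers to $n$, leaving a residual constant $\frac{1}{2} \cdot 2^{-s/d} \cdot (2n)^{-2n(t-r)}$ that a short check (using $t - r \leq n - 1$ since $r \geq 1$ and $s \geq 1$, and $s/d \leq n$) bounds from below by $(2n)^{-2n^2}$.

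The main delicacy is matching the $\frac{1}{2n}$-threshold used to define goodness in Lemma \ref{DRC} with the $\frac{1}{2e}$-threshold in Lemma \ref{Embedding}; this alignment forces $e = n$ and dictates both the hypergraph setup and why $n = |V(H)|$ appears symmetrically in both lemmas. Once that is in place, the rest is routine bookkeeping together with a single application of Jensen's inequality.
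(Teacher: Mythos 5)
Your proposal is correct and follows essentially the same route as the paper's proof: fix a good sequence $T$ from Lemma \ref{DRC}, embed $V_2$ into $N(T)$ via Lemma \ref{Embedding} with the non-rare sequences as edges, extend greedily to the rest of $V_1$, and then apply convexity of $x\mapsto x^{|V_2|/d}$ together with $\sum_{T\ \mathrm{good}}|N(T)|^d\geq h_{K_{r,d}}(G)/2$ before checking the constant. The bookkeeping (exponent of $t_{K_{r,d}}(G)$ equal to $m/(rd)$, power of $N$ equal to $n$, and the residual constant at least $(2n)^{-2n^2}$) matches the paper's computation.
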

\begin{proof}
Let $N$ denote the number of vertices of $G$. Let $n_i=|V_i|$ for $i \in
\{1,2\}$. We will give a lower bound on the number
of homomorphisms $f:V(H) \rightarrow V(G)$ that map $u_1,\ldots,u_r$ to a good
sequence $T=(v_1,\ldots,v_r)$ of $r$ vertices of
$G$. Suppose we have already picked $f(u_i)=v_i$ for $1 \leq  i \leq r$. Let
$\mathcal{H}$ be the
hypergraph with vertex set $V_2$, where $S \subset V_2$ is an edge of
$\mathcal{H}$ if there is a vertex $w \in V_1 \setminus \{u_1,\ldots,u_r\}$
such that
$N(w)=S$. The number of vertices of $\mathcal{H}$ is $n_2$, which is at most
$n$, and the number of edges of $\mathcal{H}$ is at most $n_1-r$, which is at
most $n$.
Let $\mathcal{G}$ be the strongly directed hypergraph on $N(T)$, where a
sequence $R$ of $k$
vertices in $N(T)$ is an edge of $\mathcal{G}$ if $|N(R)| \geq
(2n)^{-2n}t_{K_{r,d}}(G)^{\frac{k}{rd}}N$. Since $T$ is good, for each $k$, $d
\leq k \leq n$, the
number of sequences of $k$ vertices of $\mathcal{G}$ that are not edges of
$\mathcal{G}$ is at most $\frac{1}{2n}|N(T)|^k$.
Hence, by Lemma \ref{Embedding}, there are at least $\frac{1}{2}|N(T)|^{n_2}$
homomorphisms $g$ from $\mathcal{H}$ to
$\mathcal{G}$. Pick one such homomorphism $g$, and let $f(x)=g(x)$ for $x \in
V_2$. By construction, once we have picked $T$ and $f(V_2)$, there are at
least $(2n)^{-2n}t_{K_{r,d}}(G)^{\frac{|N(w)|}{rd}}N$ possible choices for
$f(w)$ for each vertex $w
\in V_1 \setminus \{u_1,\ldots,u_r\}$. Hence, the number of homomorphisms from
$H$ to $G$
satisfies
\begin{eqnarray*}
t_H(G)N^n & \geq & \sum_{T~\textrm{good}}\frac{1}{2}|N(T)|^{n_2}\prod_{w \in
V_1
\setminus \{u_1,\ldots,u_r\}}(2n)^{-2n}t_{K_{r,d}}(G)^{\frac{|N(w)|}{rd}}N \\ &
= &
\frac{1}{2}(2n)^{-2n(n_1-r)}t_{K_{r,d}}(G)^{\frac{m-rn_2}{rd}}N^{n_1-r}\sum_{T~\textrm{good}}|N(T)|^{n_2}
\\ & \geq &
\frac{1}{2}(2n)^{-2n(n_1-r)}t_{K_{r,d}}(G)^{\frac{m-rn_2}{rd}}N^{n_1-r}N^r\left(\sum_{T~\textrm{good}}|N(T)|^d/N^r
\right)^{n_2/d}
 \\ & = &
\frac{1}{2}(2n)^{-2n(n_1-r)}t_{K_{r,d}}(G)^{\frac{m-rn_2}{rd}}N^{n_1-rn_2/d}\left(\sum_{T~\textrm{good}}|N(T)|^d
\right)^{n_2/d}
 \\ & \geq &
\frac{1}{2}(2n)^{-2n(n_1-r)}t_{K_{r,d}}(G)^{\frac{m-rn_2}{rd}}N^{n_1-rn_2/d}\left(h_{K_{r,d}}(G)/2\right)^{n_2/d}
 \\ & =  &
2^{-1-n_2/d}(2n)^{-2n(n_1-r)}t_{K_{r,d}}(G)^{\frac{m}{rd}}N^{n_1+n_2}
 \\ & \geq & (2n)^{-2n^2}t_{K_{r,d}}(G)^{\frac{m}{rd}}N^{n}.
\end{eqnarray*}
In the first equality, we use $\sum_{w \in
V_1
\setminus \{u_1,\ldots,u_r\}}|N(w)|=m-rn_2$, which follows from the fact that
the vertices $u_1,\ldots,u_r$ each have degree $n_2$.
The second inequality uses the convexity of the function $q(x)=x^{n_2/d}$
together with the fact that there are at most
$N^r$ sequences $T$. The
third inequality follows by Lemma \ref{DRC}, and the third equality follows
from substituting $h_{K_{r,d}}(G)=t_{K_{r,d}}(G)N^{d+r}$. Dividing by $N^n$, we
get the desired inequality. \end{proof}

We next complete the proof of Theorem \ref{main1} by improving the inequality
in the
previous lemma using a tensor power trick. This technique was used by Alon and
Ruzsa \cite{AR} to give an elementary proof of Sidorenko's conjecture for
trees, which implies the Blakley-Roy matrix inequality \cite{BR}. This
technique has also been used in many other areas, and Tao \cite{Ta} has
collected a number of these applications. The tensor product $F \times G$ of
two graphs $F$ and $G$ has vertex set $V(F) \times V(G)$ and any two
vertices $(u_1,u_2)$ and $(v_1,v_2)$ are adjacent in $F \times G$ if and only
if $u_i$ is adjacent with $v_i$ for $i \in \{1,2\}$.
Let $G^1=G$ and $G^s=G^{s-1} \times G$. Note that $t_H(F \times G)=t_H(F)
\times t_H(G)$ for all $F, G, H$.

{\bf Proof of Theorem \ref{main1}:} Suppose, for contradiction, that there is a
graph $G$ such that $t_{H}(G)<t_{K_{r,d}}(G)^{\frac{m}{rd}}$. Let
$c=t_H(G)/t_{K_{r,d}}(G)^{\frac{m}{rd}}<1$. Let $s$
be such that $c^s<(2n)^{-2 n^2}$. Then
$$t_{H}(G^s)=t_H(G)^s=c^st_{K_{r,d}}(G)^{\frac{ms}{rd}}=c^st_{K_{r,d}}(G^s)^{\frac{m}{rd}}<(2n)^{-2n^2}t_{K_{r,d}}(G^s)^{\frac{m}{rd}}.$$
However, this contradicts Lemma \ref{importantstep} applied to $H$ and $G^s$.
This completes the proof.
{\hfill$\Box$\medskip}

{\bf Proof of Corollary \ref{cor}:}
First assume $H$ is connected. Let $H'$ be obtained from $H$ by making
the vertex $u$ of minimum degree in $\bar{H}$ complete to the other part. Note
that $H$ is a subgraph of $H'$ and $H'$ has exactly $w$ more edges than $H$.
Hence, by Theorem \ref{main}, $t_H(G) \geq t_{H'}(G) \geq t_{K_2}(G)^{m+w}$. If
$H$ is not connected, letting $H_1,\ldots,H_r$ denote the connected components
of $H$, we have
$t_H(G)=\prod_{i=1}^r t_{H_i}(G)$, and the corollary easily follows from the
case where $H$ is connected.
{\hfill$\Box$\medskip}

{\bf Proof of Theorem \ref{main2}:}
Let $H=(V_1,V_2,E)$ be a bipartite graph with $n$ vertices, $m$ edges such
that there are $2$ vertices in $V_1$ which are adjacent to all
vertices in $V_2$ and $|V_2| \geq 2$. We may suppose $H$ has no isolated
vertices as they do not affect
homomorphism density counts. Fix $0<p<1$. Suppose $G$ has $N$ vertices and
$(1+o(1))pN^2/2$ edges so that $t_{K_2}(G)=(1+o(1))p$, and $t_H(G) =
(1+o(1))p^m$.

We first prove the case of Theorem \ref{main2} where the minimum degree of
$H$ is at least $2$. By Theorem \ref{main1} with $r=d=2$, we have $t_H(G)
\geq t_{K_{2,2}}(G)^{m/4}$. From the above bounds on $t_{H}(G)$, we get
$t_{K_{2,2}}(G) \leq (1+o(1))p^4$. Also, since Sidorenko's conjecture
holds for $K_{2,2}$, we have $t_{K_{2,2}}(G)\geq (1+o(1))p^4$. Thus,
$t_{K_{2,2}}(G) = (1+o(1))p^4$. Since $K_{2,2}$ is forcing, this implies
$G$ is quasirandom with density $p$, and hence $H$ is forcing.

Suppose now that $H$ has $s \geq 1$ vertices of degree $1$.
Then all of these vertices are in $V_1$. Let $H'$ be the induced subgraph of
$H$ with $n-s$
vertices and $m-s$ edges obtained by deleting all degree $1$ vertices.
Note that $H'$ has minimum degree at least $2$ and also has at least two
vertices in one
part complete to the other part. By the above argument, $H'$ is forcing.
We can apply Theorem \ref{main1} with $r=2$ and $d=1$ to get $t_H(G) \geq
t_{K_{1,2}}(G)^{m/2}$. From the bounds on $t_{H}(G)$, we get
$t_{K_{1,2}}(G) \leq (1+o(1))p^2$. This inequality implies the following
claim:

\noindent {\bf Claim:} All but $o(N)$ vertices of $G$ have degree at least
$(1+o(1))pN$.

\begin{proof}
Suppose, instead, that there are $\epsilon N$ vertices with degree less than
$(1 - \epsilon)pN$. For each vertex $v$, let $\delta(v) = |N(v)| - pN$. Since
$t_{K_2}(G)=(1+o(1))p$, we have $\sum_v \delta(v) = o(p N^2)$. Moreover, since
$\delta(v) \leq - \epsilon p N$ for $\epsilon N$ vertices, $\sum_v \delta^2(v)
\geq \epsilon^3 p^2 N^3$. Therefore,
\begin{eqnarray*}
h_{K_{1,2}}(G) & = & \sum_{v}|N(v)|^2 = \sum_v (pN + \delta(v))^2\\
& = & p^2 N^3 + 2 p N \sum_v \delta(v) + \sum_v \delta^2(v)\\
& \geq & (1 + o(1) + \epsilon^3) p^2 N^3.
\end{eqnarray*}
Thus $$t_{K_{1,2}}(G) = N^{-3}h_{K_{1,2}}(G) \geq
\left(1+o(1)+\epsilon^3\right)p^2,$$ which contradicts
$t_{K_{1,2}}(G) \leq (1+o(1))p^2$ and verifies the claim.
\end{proof}

Any set $X$ of $o(N)$ vertices of $G$, and in particular the $o(N)$
vertices of least degree, can only change the subgraph density count by
$o(1)$ for any fixed subgraph. Indeed, suppose $F$ has $k$ vertices, then
the number of mappings from $V(F)$ to $V(G)$ whose image $f(F)$ has
nonempty intersection with $X$ is $N^k-(N-|X|)^k=o(N^{k})$, and hence the
fraction of mappings whose image has nonempty intersection with $X$ is
$o(1)$. Every homomorphism from $H'$ to $G$ whose image does not intersect
the $o(N)$ vertices of least degree can be extended to at least
$\left((1+o(1))pN\right)^s$ homomorphisms from $H$ to $G$ by picking the
images of the $s$ vertices of $H$ of degree $1$. By normalizing, we get
$t_{H}(G) \geq (1+o(1))p^st_{H'}(G)-o(1)$. Comparing the bounds on
$t_H(G)$, we get $t_{H'}(G) \leq (1+o(1))p^{m-s}$. Since $H'$ satisfies
Sidorenko's conjecture, $t_{H'}(G) \geq (1+o(1))p^{m-s}$, and hence
$t_{H'}(G) = (1+o(1))p^{m-s}$. As $H'$ is forcing, this implies $G$ is
quasirandom with density $p$, and hence $H$ is also forcing.
{\hfill$\Box$\medskip}

\section{Concluding Remarks}

$\bullet$ The classical theorem of Ramsey states that every $2$-edge-coloring
of a sufficiently large complete graph $K_N$ contains at least one
monochromatic copy of a given graph $H$. Let $c_{H,N}$ denote the fraction of
copies of $H$ in $K_N$ that must be monochromatic in any $2$-edge-coloring. By
an averaging argument, $c_{H,N}$ is a monotone increasing function in $N$, and
therefore has a limit $c_H$ as $N \to \infty$. The constant $c_H$ is known as
the {\it Ramsey multiplicity constant} for the graph $H$. For $H$ with $m$
edges, the uniform random $2$-edge-coloring of $K_N$ shows that $c_H \leq
2^{1-m}$. Erd\H{o}s \cite{Er} and, in a more general form, Burr and Rosta
\cite{BuRo} conjectured that this bound is tight. However, Thomason \cite{Th}
proved that these conjectures are already false for $K_4$. In fact, it is false for almost all
graphs
\cite{JST}, and there are graphs $H$ with $m$ edges for which $c_{H} \leq
m^{-\frac{m}{2}+o(m)}$, showing that the bound can be far from tight \cite{F}.
The situation for bipartite graphs is very different as Sidorenko's conjecture
implies the Ramsey multiplicity conjecture holds for bipartite graphs. Thus,
for a large class of bipartite graphs, this conjecture is implied by Theorem
\ref{main} and further a stability result follows from Theorem \ref{main2}. For
all bipartite graphs, an approximate version of this conjecture follows from
Corollary \ref{cor}.

Despite the fact that the conjectures of Erd\H{o}s and Burr-Rosta are false for $K_4$,
Franek and R\"odl \cite{FrRo} proved that a local version is true in this case. They showed that a small
perturbation of a quasirandom coloring will
not give a counterexample to these conjectures. In terms of graphons, in which
we consider the limit of the graphs of one color for a sequence of colorings,
the uniform graphon, which is the limit of a sequence of
quasirandom graphs with density $1/2$, is a local minimum for the density of
monochromatic copies of the graph $K_4$, but not always the global minimum. Very
recently,
Lov\'asz \cite{Lo2} proved the analogous local version of Sidorenko's
conjecture.

$\bullet$ It is tempting to conjecture that the products of
integrals in the left hand side of (\ref{integralineq}) are always
non-negatively correlated.
Equivalently, if $H$ is a bipartite graph and $H_1,\ldots,H_r$ are subgraphs
which edge-partition $H$, then for every graph $G$ we have
\begin{equation}\label{nonnegativecor}
t_{H}(G) \geq \prod_{i=1}^r t_{H_i}(G).
\end{equation} Sidorenko's conjecture is the case $H_i=K_2$ for each $i$, and
the forcing conjecture would follow by taking $H_1$ to be a cycle in $H$ and
all other $H_i=K_2$. Unfortunately, (\ref{nonnegativecor}) does not hold in
general. As noted in \cite{Si91,Si3}, a counterexample to this inequality with
$H=P_3$,
$H_1=P_2$, and $H_2=P_1$, where $P_i$ denotes the path with $i$ edges, was
found in 1966 by London (see \cite{Si91}).

$\bullet$ Let $\|h\|_H$ be the integral on the left hand side of
(\ref{integralineq}) raised to the power $1/|E(H)|$. Lov\'asz asked for which
graphs $H$ is $\|\cdot\|_H$ a norm. When $H$ is an even
cycle, for example, they are the classical Schatten-von Neumann norms. Hatami
\cite{Ha} showed
that if $\|\cdot\|_H$ is a norm then $H$ has the following property. For all
subgraphs $H'$ of $H$ and all $h$,
\begin{equation}\label{partialorder}\|h\|_H \geq \|h\|_{H'}.\end{equation}
Therefore, if $\|\cdot\|_H$ is a norm, then, taking $H'=K_2$, we see that
Sidorenko's conjecture holds for $H$, and if $H$ also has a cycle, then, taking
$H'$ to be that cycle, we see that $H$ is forcing. Property
(\ref{partialorder}) implies that the non-negative correlation inequality
(\ref{nonnegativecor}) holds for $H$. Hence, property (\ref{partialorder}) does
not hold for $H=P_3$. Hatami proved that if $\|\cdot\|_H$ is a norm, then
the degrees of any two vertices in the same part of $H$ are
equal. This shows, in combination with Theorem \ref{main}, that the class of graphs for which $\|\cdot\|_H$
is a norm is considerably smaller than that for which Sidorenko's
conjecture holds. In the positive direction, Hatami proves that if $H$ is a
cube or a complete bipartite graph, then $\|\cdot\|_H$ is a norm.

This discussion leads to the following general question: for which pairs of
graphs $H,H'$ do we have $\|\cdot\|_H \geq \|\cdot\|_{H'}$? Sidorenko's
conjecture is that if $H'=K_2$, then this inequality holds if and only if $H$
is bipartite. Theorem \ref{main1} shows that this inequality holds if
$H'=K_{r,d}$ and $H$ has $r$ vertices in the first part complete to the second
part and the minimum degree in the first part is at least $d$.  The case where
$H$ and $H'$ are trees of the same size is studied in \cite{Si91}.
From the fact the inequality holds if $H$ is a complete bipartite graph and
$H'=K_2$, one may naturally guess that the inequality holds if $H$ is a
complete tripartite graph and $H'=K_3$, but a counterexample is given in
\cite{ShYu}. It may be the case that the general problem is undecidable. There
are other similar questions involving linear inequalities between graph homomorphism densities which are
undecidable by a reduction to Hilbert's 10th Problem (see  \cite{HN}).

$\bullet$ Call a family $\mathcal{F}$ of graphs $p$-forcing if, whenever
$t_F (G_n) = (1 +o(1)) p^{|E(F)|}$ for all $F \in \mathcal{F}$, the sequence
$(G_n: n = 1, 2, \ldots)$ is $p$-quasirandom. If $\mathcal{F}$ is $p$-forcing
for all
$p$, we simply say that the family $\mathcal{F}$ is forcing. Note that the
statement that a graph $H$ is forcing is equivalent to the family
$\mathcal{F}=\{K_2,H\}$ being forcing.
The first examples of forcing families not involving any bipartite graphs were
given in \cite{CHPS}. For example,
let $L_3$ be the line graph of the cube, that is, it has $12$ vertices
corresponding to the
edges of the cube and two vertices are connected if and only if the
corresponding edges meet.
Then the pair consisting of $L_3$ and the triangle $K_3$ is forcing. It would
be interesting
to extend this result and determine what other graphs may be coupled with
the triangle to give a forcing pair.

$\bullet$ We conclude by mentioning that counterexamples are given in
\cite{Si3} to the natural generalization of inequality (\ref{integralineq}) to
functions of more than two variables. That is, the hypergraph analogue of
Sidorenko's conjecture is false.

\end{document}